\def\hpic #1 #2 {\mbox{$\begin{array}[c]{l} \epsfig{file=#1,height=#2} \end{array}$}}
\def\vpic #1 #2 {\mbox{$\begin{array}[c]{l} \epsfig{file=#1,width=#2} \end{array}$}}
\newtheorem{theorem}{Theorem}[section]
\newtheorem*{theorem*}{Theorem}
\newtheorem*{proposition*}{Proposition}
\newtheorem{lemma}[theorem]{Lemma}
\newtheorem*{lemma*}{Lemma}
\newtheorem{corollary}[theorem]{Corollary}
\newtheorem*{conjecture*}{Conjecture}
\newtheorem*{fact*}{Fact}
\newtheorem*{hypothesis*}{Hypothesis}
\theoremstyle{definition}
\newtheorem{definition}[theorem]{Definition}
\newtheorem*{claim*}{Claim}
\newtheorem{remark}[theorem]{Remark}
\newtheorem*{remark*}{Remark}
\newtheorem*{observation*}{Observation}
\title{On Eigenvalue Gaps of Integer Matrices}
\author{Aaron Abrams\footnote{Washington and Lee University, {\tt abramsa@wlu.edu}.}, Zeph Landau\footnote{UC Berkeley, {\tt zlandau@berkeley.edu}.}, Jamie Pommersheim\footnote{Reed College, {\tt jamie@reed.edu}.}, Nikhil Srivastava\footnote{UC Berkeley. Supported by NSF Grant CCF-2009011, {\tt nikhil@math.berkeley.edu}.}}
\date{\today}
\newcommand{\C}{{\mathbb C}}
\newcommand{\Z}{{\mathbb Z}}
\newcommand{\B}{{\mathcal B}}
\renewcommand{\P}{{\mathcal P}}
\newcommand{\M}{{\mathcal M}}
\DeclareMathOperator{\Tr}{Trace}
\DeclareMathOperator{\spec}{{\mathfrak{spec}}}
\begin{document}
\maketitle

\begin{abstract}
    Given an $n\times n$ matrix with integer entries in the range $[-h,h]$, how close can two of its distinct eigenvalues be?
    
The best previously known examples (\cite{parlett1992minimum,wilkinson1988algebraic}) have a minimum gap of $h^{-O(n)}$.
Here we give an explicit construction of matrices with entries in $[0,h]$ with two eigenvalues separated by at most $h^{-n^2/16+o(n^2)}$.  Up to a constant in the exponent, this agrees with the known lower bound of $\Omega((2\sqrt{n})^{-n^2}h^{-n^2})$ \cite{mahler1964inequality}. Bounds on the minimum gap are relevant to the worst case analysis of algorithms for diagonalization and computing canonical forms of integer matrices (e.g. \cite{dey2021bit}).

In addition to our explicit construction, we show there are many matrices with a slightly larger gap of roughly $h^{-n^2/32}$.  We also construct 0-1 matrices which have two eigenvalues separated by at most $2^{-n^2/64+o(n^2)}$.

\noindent {\em MSC Codes: 15A18, 15B36 }

\end{abstract}

\section{Introduction}

We study the following question:
\begin{quote}
    Given an $n\times n$ matrix with integer entries in the range $[-h,h]$, how close can two of its distinct eigenvalues be?
\end{quote}
Let $g(n,h)$ be this minimum. We will be concerned with the asymptotic behavior of $g$ for large $n$ and $h$. 

Besides being of intrinsic interest, the question of eigenvalue gaps of integer matrices bears on the computational efficiency of certain matrix algorithms, as we discuss in Section \ref{sec:motivation} below.

Already in 1959, Olga Taussky \cite{taussky} began an AMS invited address on the broad theory of integer matrices by saying, ``This subject is very vast and very old.''  
Considering this, and of course the central role of eigenvalues in the subject, it is perhaps surprising that even today the gap problem has received relatively little attention.  Previous upper and lower bounds for $g(n,h)$ are miles apart. Upper bounds, given by explicit construction, are close to $h^{-n}$, while the best known lower bounds are less than  $h^{-n^2}$. 

In this note we improve the upper bound to 
\begin{equation}\label{result}
g(n,h) \le h^{-n^2/16+o(n^2)}.
\end{equation}
By analyzing a particularly nice family of matrices, we are able to find explicit examples of matrices exhibiting this gap.  These matrices are nearly irreducible, in a sense we will describe.  In addition, we show that the number of distinct eigenvalues of these matrices is large enough to imply existence of many matrices with a slightly larger gap, roughly $h^{-n^2/32}$.  The matrices guaranteed by this argument are not irreducible, however.

We remark that the study of BOunded HEight Matrices of Integers, into which our results fall, has relatively recently been coined ``Bohemian'' matrix theory; see \cite{bohemian} for some of the history as well as new results and spectacular images of the distributions (though not the gaps) of eigenvalues of several related families of matrices.

\subsection{Background}

The best previously known upper bound on $g$ is
\begin{equation}\label{eqn:ub}
    g(n,h)<2h^{-(n-2)}.
\end{equation}
Building on an example of Wilkinson \cite[p. 308]{wilkinson1988algebraic}, Parlett and Lu \cite[Theorem 2.3]{parlett1992minimum} proved \eqref{eqn:ub} for the symmetric, tridiagonal matrix with ones above and below the diagonal and diagonal entries $h,0,0,\ldots,0,h$.  In particular all entries of this matrix are non-negative.


On the other hand, the best known lower bound on $g$ is 
\begin{equation}\label{eqn:lb} 
    g(n,h) \geq (2\sqrt{n}h)^{-n(n-1)}=\Omega((2\sqrt{n})^{-n^2}h^{-n^2}).
\end{equation} 
This follows from Mahler's result \cite{mahler1964inequality} that the minimum gap between distinct roots of a degree $d$ integer {\em polynomial} of height\footnote{We refer to the largest coefficient/entry of an integer polynomial/matrix as its height.} $H$ is at least $H^{-(d-1)}$, applied to the characteristic polynomial of an $n\times n$ matrix of height $h$.  Such a characteristic polynomial has coefficients crudely bounded by $H\le 2^n(\sqrt{n}h)^n$ via Hadamard's inequality.

To put the huge discrepancy between the bounds \eqref{eqn:ub} and \eqref{eqn:lb} into context, consider for comparison that the corresponding question regarding gaps between zeros of integer polynomials of height $H$ has been resolved up to a constant factor in the exponent of $H$; see \cite{mahler1964inequality,collins1974minimum,mignotte1982some,bugeaud2004distance,bugeaud2011root, gotze2016discriminant}. The sharpness of the bounds for polynomials does not immediately translate to correspondingly sharp bounds for matrices because the height blowup when passing from a matrix to its characteristic polynomial can vary exponentially (as studied e.g. in \cite{chan2020upper} for Hessenberg matrices).

In Section \ref{sec:explicit} of this paper we show that a family of polynomials constructed by Mignotte \cite{mignotte1982some} to have small gaps between real roots can in fact be realized as the characteristic polynomials of integer matrices of exponentially smaller height.  Thus the upper bound for polynomials can be transported to matrices after all.

\begin{theorem}\label{thm:explicit} 
    For every odd integer $n$ and $h\ge 2$, there is a $(2n+1)\times (2n+1)$ matrix $B$ with entries in $\{0,1,\ldots, h\}$ and two real eigenvalues that differ by at most $h^{-{\frac{(n+3)(n-3)}{4}}}$. The two eigenvalues are roots of an irreducible (over $\mathbb{Z}$) factor of $\chi(B)$ of degree $n+1$.
\end{theorem}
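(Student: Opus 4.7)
The plan is to exhibit a suitable Mignotte-style polynomial as an irreducible factor of the characteristic polynomial of an explicit $(2n+1) \times (2n+1)$ non-negative integer matrix. Since $n$ is odd and $\geq 3$, the number $(n-3)/2$ is a non-negative integer, so $A := c \cdot h^{(n-3)/2}$ is a positive integer for any positive integer constant $c$ (to be chosen to absorb a small leading constant). The candidate polynomial is the degree-$(n+1)$ Mignotte polynomial
\[
p(x) \;=\; x^{n+1} - 2(Ax-1)^2 \;=\; x^{n+1} - 2A^2\,x^2 + 4A\,x - 2.
\]
A second-order Taylor expansion at $x = 1/A$ --- using $p(1/A) = A^{-(n+1)}$, $p'(1/A) = (n+1)A^{-n}$, and $p''(1/A) = n(n+1)A^{-(n-1)} - 4A^2 \sim -4A^2$ --- shows that $p$ has exactly two real roots in a small neighborhood of $1/A$, at distance $(1+o(1))\sqrt{2}\,A^{-(n+3)/2}$ apart. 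Substituting $A = c\cdot h^{(n-3)/2}$ and choosing $c$ to absorb the constant $\sqrt{2}$ produces a root gap of at most $h^{-(n+3)(n-3)/4}$.

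The main technical task is to realize $p(x)$ as a factor of $\chi(B)(x)$ for some $(2n+1) \times (2n+1)$ matrix $B$ with entries in $\{0,1,\ldots,h\}$. A na\"ive companion-matrix realization is impossible, since the coefficient $2A^2 = \Theta(h^{n-3})$ far exceeds $h$. My plan is to use the extra $n$ dimensions to ``spread'' the large coefficients across chains of entries equal to $h$: a directed chain of length $(n-3)/2$ manufactures the factor $A = h^{(n-3)/2}$, and a chain of length $n-3$ manufactures $A^2 = h^{n-3}$. I will take $B$ to be the weighted adjacency matrix of a nearly irreducible digraph on $2n+1$ vertices consisting of one long cycle together with a few carefully placed chords of small weights ($1$, $2$, or $4$), so that the cycle-cover expansion of $\det(xI-B)$ produces a polynomial of the form $p(x)\,q(x)$ for some $q \in \Z[x]$ of degree $n$. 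This factorization can then be verified directly by the combinatorial expansion of the determinant over cycle covers of the digraph of $B$.

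To conclude, I verify that $p(x) = x^{n+1} - 2(Ax-1)^2$ is irreducible over $\Z$ by applying Eisenstein's criterion at the prime $2$: the non-leading coefficients of $p$ are $0, \ldots, 0, -2A^2, +4A, -2$, all divisible by $2$; the leading coefficient $1$ is not; and the constant term $-2$ is not divisible by $4$. So $p$ is irreducible, and by Gauss's lemma the cofactor $q$ automatically lies in $\Z[x]$. The two close real roots of $p$ are therefore roots of an irreducible factor of $\chi(B)$ of degree $n+1$, matching the statement of the theorem.

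The principal obstacle is the explicit construction in the second paragraph. The coefficient $+4A$ of $x$ in $p$ is positive, while in the cycle-cover expansion of $\det(xI-B)$ each cycle of length $\geq 2$ contributes an overall factor of $-1$, so positive coefficients of $\chi(B)$ must arise only from cycle covers containing an \emph{even} number of non-trivial cycles. Thus the digraph of $B$ must be engineered so that two (or more) disjoint short cycles conspire to produce the positive $x^1$ coefficient, while simultaneously killing every intermediate coefficient $x^3,\ldots,x^n$ and giving precisely the required values $-2A^2$ and $-2$ at $x^2$ and $x^0$. Verifying the factorization $\chi(B) = p\cdot q$ by direct combinatorial computation is the technical heart of the proof.
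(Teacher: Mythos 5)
Your proposal sketches the right circle of ideas — the Mignotte polynomial $X^{n+1} - 2(aX-1)^2$ with $a = h^{(n-3)/2}$, realization via the cycle-cover expansion of $\det(tI-B)$, and Eisenstein at $2$ for irreducibility — but it stops short of the one step that makes the construction go through, and you flag this yourself in your final paragraph. The sign obstacle you identify is real: for a non-negative matrix $B$ of the lower-Hessenberg form the paper uses, every non-trivial cycle cover contributes with sign $(-1)^{\text{cycle length}-1}\cdot(-1)^{\text{\#fixed points}} = +1$ after accounting for the $-t$'s on the diagonal, so every sub-leading coefficient of $\chi(B)$ comes out \emph{non-positive}. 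Since $p(x) = x^{n+1} - 2A^2x^2 + 4Ax - 2$ has a positive $x$-coefficient, $p$ itself cannot arise this way. The fix the paper uses, which your write-up is missing, is to target $m_{n+1,a}(-t)$ rather than $m_{n+1,a}(t)$: because $n+1$ is even when $n$ is odd, $m_{n+1,a}(-t) = t^{n+1} - 2a^2t^2 - 4at - 2$ has \emph{all} sub-leading coefficients negative, exactly the form the cycle-cover expansion produces, and the root gap is unchanged under $t \mapsto -t$. With that observation, the explicit matrix is straightforward: superdiagonal $1,\dots,1,h,\dots,h$ (as in the set $\B$) plus three carefully placed chord entries of weights $2$, $4$, $2$ that manufacture the coefficients $2$, $4h^{(n-3)/2}$, $2h^{n-3}$ via the geometric build-up of $h$-powers along the chain, yielding $\chi_M(t) = t^n\,m_{n+1, h^{(n-3)/2}}(-t)$.

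Two smaller points. First, you do not need your Taylor-expansion estimate with the fudge constant $c$: Mignotte's theorem already gives the root gap bound $a^{-(n+3)/2} = h^{-(n+3)(n-3)/4}$ on the nose with $A = h^{(n-3)/2}$, and introducing an integer multiplier $c$ would perturb the coefficients away from the cleanly realizable form. Second, the plan to ``engineer two disjoint short cycles to produce a positive $x^1$ coefficient'' would be much harder than the sign flip — indeed that structure is incompatible with the single-loop property of the Hessenberg graph in $\B$, where every non-trivial cycle cover is a single simple cycle through vertex $0$. Your Eisenstein argument for irreducibility of the degree-$(n+1)$ factor is correct and is the same one implicit in the paper.
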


The bound \eqref{result} follows.
In the regime $h=\Omega(n^c)$ this matches the lower bound \eqref{eqn:lb} up to a constant factor in the exponent of $h$.

As 0-1 matrices are of significant independent interest, we observe that the preceding theorem can easily be modified to cover this case, with a similar result.

\begin{corollary}\label{cor:01} 
    For every odd integer $n$, there is a $(4n+2)\times (4n+2)$ matrix $B$ with entries in $\{0,1\}$ and two eigenvalues at distance at most $2^{-{\frac{(n+3)(n-3)}{4}}}$. The two eigenvalues are roots of an irreducible (over $\mathbb{Z}$) factor of $\chi(B)$ of degree $2n+2$.
\end{corollary}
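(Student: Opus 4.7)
My plan is to convert the $(2n+1)\times(2n+1)$ integer matrix $M$ produced by Theorem~\ref{thm:explicit} with $h=2$ (so $M$ has entries in $\{0,1,2\}$ and two real eigenvalues $\mu_1,\mu_2$ at distance at most $2^{-(n+3)(n-3)/4}$, both roots of an irreducible degree-$(n+1)$ factor $p(x)$ of $\chi(M)$) into a $0$-$1$ matrix of size $4n+2$ via a companion-style doubling that eliminates the $2$'s while doubling the degree of the relevant minimal polynomial.

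The key step is to produce $0$-$1$ matrices $M_1,M_2\in\{0,1\}^{(2n+1)\times(2n+1)}$ with $M_1M_2=M$ (a \emph{multiplicative} 0-1 factorization of $M$, not the easy additive one), and then set
\[
B = \begin{pmatrix} 0 & M_1 \\ M_2 & 0 \end{pmatrix}.
\]
A Schur-complement calculation yields $\chi(B)(x)=\det(x^2 I - M_1M_2)=\chi(M)(x^2)$, so $p(x^2)$ is a factor of $\chi(B)$. For the specific Mignotte-type polynomial $p(x)=x^{n+1}-2(ax-1)^2=x^{n+1}-2a^2 x^2+4ax-2$ that underlies Theorem~\ref{thm:explicit}, Eisenstein's criterion at the prime $2$ applies directly to $p(x^2)=x^{2(n+1)}-2a^2 x^4+4ax^2-2$ (all non-leading coefficients are even and the constant term $-2$ is not divisible by $4$), so $p(x^2)$ is irreducible over $\mathbb{Z}$ of degree $2n+2$. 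The two close eigenvalues of $B$ are then $\sqrt{\mu_1}$ and $\sqrt{\mu_2}$, at distance $|\mu_1-\mu_2|/(\sqrt{\mu_1}+\sqrt{\mu_2})$, well within the claimed bound $2^{-(n+3)(n-3)/4}$.

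The central obstacle is producing the $0$-$1$ factorization $M=M_1M_2$. I would address this by inspecting the explicit construction of $M$ in the proof of Theorem~\ref{thm:explicit}---which is tailored to realize a specific structured polynomial and should have a highly constrained pattern of $2$'s that can be ``unfolded'' as a product of two $0$-$1$ matrices (or, when $M$ is symmetric, as $M=AA^T$, in which case $B$ is simply the bipartite double cover of the single $0$-$1$ matrix $A$)---or, failing a direct reading, by modifying the proof of Theorem~\ref{thm:explicit} to build $M_1$ and $M_2$ in parallel with $M$. Once the factorization is in hand, the remaining verifications (irreducibility of $p(x^2)$, the exact gap $|\sqrt{\mu_1}-\sqrt{\mu_2}|$, and the size and entries of $B$) are routine.
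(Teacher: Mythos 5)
Your approach is structurally different from the paper's, and it has a genuine gap at its core.

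The paper's proof is a one-line observation about the \emph{double cover}: starting from the $(2n+1)\times(2n+1)$ matrix $M$ with entries in $\{0,1,2\}$, replace each entry by a $2\times 2$ block ($0\mapsto 0$, $1\mapsto I_2$, $2\mapsto J_2$, the all-ones matrix). Writing $M=M_1+2M_2$ for the disjointly-supported $0$-$1$ matrices $M_1,M_2$, the resulting $M'=M_1\otimes I_2+M_2\otimes J_2$ satisfies $M'(v\otimes(1,1)^{\mathrm T})=(Mv)\otimes(1,1)^{\mathrm T}$, so every eigenvalue of $M$ is an eigenvalue of $M'$ with the gap preserved \emph{exactly}. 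No factorization or square roots are needed, and $M'$ is automatically $0$-$1$.

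Your proposal instead requires a \emph{multiplicative} $0$-$1$ factorization $M=M_1M_2$, which you correctly identify as ``the central obstacle'' but do not establish. This is not a routine step: the Mignotte matrix has several rows containing a single entry equal to $2$, and $(M_1M_2)_{ij}=2$ with all other entries in row $i$ vanishing forces two rows of $M_2$ to be the same standard basis vector $e_j^{\mathrm T}$, severely constraining $M_2$. There is no reason offered why a consistent choice exists, and the fallback of writing $M=AA^{\mathrm T}$ is unavailable since the Mignotte matrix is far from symmetric. In addition, your gap estimate is reasoned backwards: the two close roots of $m_{d,a}(-t)$ lie near $-1/a$ with $a=2^{(n-3)/2}$ large, so $|\sqrt{\mu_1}+\sqrt{\mu_2}|\approx 2/\sqrt a\ll 1$ and the distance $|\mu_1-\mu_2|/|\sqrt{\mu_1}+\sqrt{\mu_2}|$ is roughly $(\sqrt a/2)\,|\mu_1-\mu_2|$, i.e.\ the gap of $B$ is \emph{larger} than the gap of $M$. (A more careful count shows this inflated gap would still sit under $2^{-(n+3)(n-3)/4}$, since Theorem~\ref{thm:explicit} in the $h=2$ case actually gives the slightly stronger $2^{-(n+5)(n-3)/4}$, but your phrase ``well within the claimed bound'' signals the opposite intuition.) Until the factorization is produced, the argument does not go through; and even with it in hand, it is considerably more elaborate than the paper's eigenvector-lift argument.
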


\subsection{Motivation}\label{sec:motivation}

This work is motivated in part by connections between the eigenvalue gaps of integer matrices and the algorithmic problem of efficiently computing the eigenvalues, eigenvectors, and more generally the Jordan Normal Form of an integer matrix (see e.g. the introduction of \cite{dey2021bit} for a detailed discussion). In particular, the number of bits of precision with which rational arithmetic must be performed in order to resolve distinct eigenvalues of an $n\times n$ integer matrix with entries in $[-h,h]$ is roughly $\log g(n,h)$ in the worst case, and this appears as a multiplicative factor in the running time of the currently best known algorithms for these problems (in particular, the bound $\eqref{eqn:lb}$ is used to establish the running time of the algorithm in \cite{dey2021bit}). Thus, improvements on minimum eigenvalue gap bounds --- particularly in the exponent of $h$ --- translate directly into improved worst case complexity estimates for these problems in certain models of computation. Our results show that such asymptotic improvements are not possible in the generality of arbitrary integer matrices when $h$ is sufficiently large depending on $n$. 

The irreducibility property in Theorem \ref{thm:explicit} shows that closely spaced eigenvalues of integer matrices must be dealt with numerically in the worst case, as there is no integer similarity which separates them via block-diagonalization. Even the reducible examples in Theorem \ref{thm:many} (below) present a serious algorithmic difficulty, as any direct sum of two such matrices can be conjugated by a unimodular integer matrix of small height (such as a Hadamard matrix, or a random such matrix) to yield an integer matrix with small eigenvalue gap which may or may not be easy to block-diagonalize in a way which separates the eigenvalues.

\subsection{Approach}

The construction used to prove Theorem \ref{thm:explicit} is based on the following circle of ideas.  If $M$ is an $n\times n$ matrix with monic characteristic polynomial $\chi(M)=\sum\limits_{i=0}^n a_i t^i$ then the $a_i$ are symmetric functions of the eigenvalues of $M$.  The sum $\sigma_j$ of the $j$th powers of the eigenvalues is also a symmetric function of the eigenvalues, and in particular the collection $\{\sigma_j\}_{j=1}^n$ determines the numbers $a_i$.  For instance $\sigma_1$, the trace of $M$, is equal to $-a_{n-1}$.  If $M$ is the (weighted) adjacency matrix of a weighted, directed graph $G$, then up to a factor of $j$, $\sigma_j=\Tr(M^j)$ is the total weight of all cycles in $G$ of length $j$.  Therefore, to construct lots of matrices with distinct characteristic polynomials, we  construct lots of weighted, directed graphs with different cycle weights $\sigma_j$. All of our matrices have a Hessenberg structure which simplifies the relevant combinatorics.

It turns out that the Mignotte polynomials arise from our construction, which enables us to give the explicit examples of Theorem \ref{thm:explicit}.  In addition, we apply a number theoretic result to show that a significant number of our polynomials are irreducible over $\Z$.
As distinct irreducible integer polynomials do not share roots, this implies that there are in some sense as many distinct eigenvalues of bounded height matrices as one would expect.

\begin{theorem}\label{thm:many}
    Let $\spec(n,h)=\{\lambda \in \C : \lambda \mbox{ is an eigenvalue of an } n\times n \mbox{ integer matrix of height } h \}$.  
    If $n$ is a power of $2$ then this set has size 
    $$|\spec(2n+1,h)| \ge \frac{2n}{5^{2n}}\cdot h^{n^2}.$$
\end{theorem}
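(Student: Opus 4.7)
The plan is to enlarge the parametrized family of matrices used in Theorem \ref{thm:explicit}, count how many distinct monic irreducible factors appear among the resulting characteristic polynomials, and then invoke the basic fact that distinct monic irreducible polynomials over $\Z$ have disjoint root sets in $\C$. Because every irreducible factor we produce will have degree roughly $2n$, each one contributes that many distinct eigenvalues to $\spec(2n+1,h)$, which explains the prefactor $2n$ in the bound and reduces the theorem to producing $\ge h^{n^2}/5^{2n}$ distinct such factors.

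First I would describe a Hessenberg family of $(2n+1)\times(2n+1)$ matrices in which roughly $n^2$ free entries range over $\{0,1,\ldots,h-1\}$, giving $\Theta(h^{n^2})$ matrices. Using the cycle-weight interpretation of $\sigma_j = \Tr(M^j)$ described in Section 1.3, I would verify that varying these free entries produces $\Theta(h^{n^2})$ distinct tuples $(\sigma_1,\ldots,\sigma_{2n})$, and hence $\Theta(h^{n^2})$ distinct characteristic polynomials (up to a fiber of small, controlled size). The Hessenberg sparsity is important here: it keeps the cycle combinatorics simple enough that the map from entries to power sums is injective on a large subset of parameters.

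Second, I would apply an irreducibility criterion, tailored to the case that $n$ is a power of $2$, to argue that an exponentially nontrivial fraction (at least $5^{-2n}$) of the constructed polynomials admit an irreducible factor of degree $2n$ over $\Z$. The natural tool is reduction modulo a small prime together with a Newton-polygon or Capelli/Swan-type argument: when $n$ is a power of $2$, the admissible factorization patterns for a degree-$2n$ polynomial mod the chosen prime are sufficiently restricted that a constant-density subset of parameter choices forces irreducibility (or at least the presence of a degree-$2n$ irreducible factor). The factor $5^{-2n}$ is the resulting density loss, and the factor $2n$ in the bound counts the roots of each irreducible factor.

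The main obstacle is the irreducibility step: converting the parameter-freedom into a genuine lower bound on the number of distinct irreducible factors, uniformly in $h$, with the explicit density $5^{-2n}$. This is the number-theoretic heart of the argument and requires both that the cycle-weight-to-coefficient map be quantitatively well-behaved (so that most parameter choices yield distinct polynomials) and that a substantial proportion of those polynomials satisfy the chosen irreducibility criterion precisely under the power-of-$2$ hypothesis. Once this density is in hand, the remainder of the proof is a counting argument combining the disjointness of the root sets of distinct irreducibles with the $2n$ degree per factor.
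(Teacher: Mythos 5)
Your high-level plan matches the paper's: take a Hessenberg family of $\Theta(h^{n^2})$ matrices of size $(2n+1)\times(2n+1)$, establish that the associated characteristic polynomials are distinct, and then impose mod-$p$ congruence conditions on the coefficients so that a constant-density ($5^{-O(n)}$) subfamily is forced to have a degree-$2n$ irreducible factor, with the factor $2n$ in the final bound coming from counting roots of distinct irreducibles. Two remarks, one minor and one substantive.

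The minor one: the paper does not need the hedging ``up to a fiber of small, controlled size.'' Theorem~\ref{thm:bijection} gives an \emph{exact} bijection $\chi:\B\to\P$ between the Hessenberg family and an explicit coefficient box $\P$ of size $h^{n^2}$, so the first step is clean and there is no fiber to control. This is where the specific superdiagonal $(1,\dots,1,h,\dots,h)$ and the coefficient constraints defining $\P$ come from; it is worth realizing that the map to \emph{coefficients} (not power sums $\sigma_j$) is what is directly shown to be a bijection.

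The substantive one is the gap you yourself flag: the irreducibility step is not an ``argument in principle,'' it is a specific lemma, and your description of the mechanism is off. It is not that the power-of-$2$ hypothesis restricts the possible factorization patterns of general degree-$2n$ polynomials mod $p$; rather, it guarantees that a \emph{specific binomial} is irreducible mod a well-chosen prime, and one then forces the reduction of $P$ to equal $t$ times that binomial. Concretely, the paper invokes Lemma~\ref{lem:mod5irr} (Gao--Panario): if $q\equiv 1\pmod 4$ is prime and $a$ is a quadratic nonresidue in $F_q$, then $t^{2^k}-a$ is irreducible mod $q$. Taking $q=5$, $h$ coprime to $5$, and $a\in\{h^{n-2},\,2h^{n-2}\}$ whichever is a QNR mod $5$, the polynomial $p(t)=t^{2n}-a$ is irreducible mod $5$ (hence over $\Z$) precisely because $n$ is a power of $2$. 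One then counts $P\in\P$ with $P\equiv t\,p(t)\pmod 5$: this is one congruence on each of the $2n-1$ free coefficients $a_0,\dots,a_{2n-2}$, costing a factor of at most $5^{2n-1}$, and each such $P$ carries a distinct degree-$2n$ irreducible factor over $\Z$, hence $2n$ distinct eigenvalues. Your ``Capelli/Swan-type'' instinct is pointing in the right direction (the binomial criterion is Capelli-flavored), but without identifying the concrete lemma, the choice of modulus $5\equiv 1\pmod 4$, the specific target polynomial $t\,p(t)$, and the reason the power-of-$2$ hypothesis is used, the argument does not close.
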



As elements of $\spec(n,h)$ are bounded in magnitude by $nh$,
Theorem \ref{thm:many} implies that there are many examples of integer matrices with small eigenvalue gaps: simply choose any two nearby eigenvalues in $\spec(n/2,h)$, and take the direct sum of the matrices from which these two eigenvalues arise. In contrast to Theorem \ref{thm:explicit}, the nearby eigenvalues in this case do not come from a single irreducible factor. The proof of Theorem \ref{thm:many} appears in Section \ref{sec:many}.

We note that the distribution of the zeros and discriminant of integer polynomials was studied in \cite{beresnevich2010distribution,beresnevich2016integral}, which established certain density results in that context.

We also remark that ``minimal height companion matrices'' were studied in  \cite{chan-corless}, and it is possible that our construction in Theorem \ref{thm:explicit} produces such companion matrices for the Mignotte polynomials.

\subsection*{Acknowledgement}
We thank Henry and Julie Landau for the many ways they made this work possible. 

\section{A Set of Matrices}
The following set of (lower Hessenberg) matrices is used in all of our results.
Let $\B$ denote the set of $(2n+1 )\times (2n+1)$ matrices $$B=(B_{i,j})_{i\in[-n,n], j\in [-n,n]}$$ 
of the  form:
\begin{itemize}
\item 
    $B_{i, i+1}  =
    \begin{cases}
      1 & \text{for } i \in [-n, 0]\\
      h & \text{for } i \in [1, n-1].
    \end{cases}      
    $
\item 
    $B_{ij} \in \{0,1,\dots,h-1\}$ for $i \in [1, n]$ and $j \in [-n,-1]$; we let $A$ denote this lower left $n\times n$ corner of $B$.
\item 
    All other entries of $B$ are zero.
\end{itemize}
See Figure \ref{fig:matrix}.  Note that $|\B|=h^{n^2}$.

\begin{figure}[h]
\begin{center}
\hpic {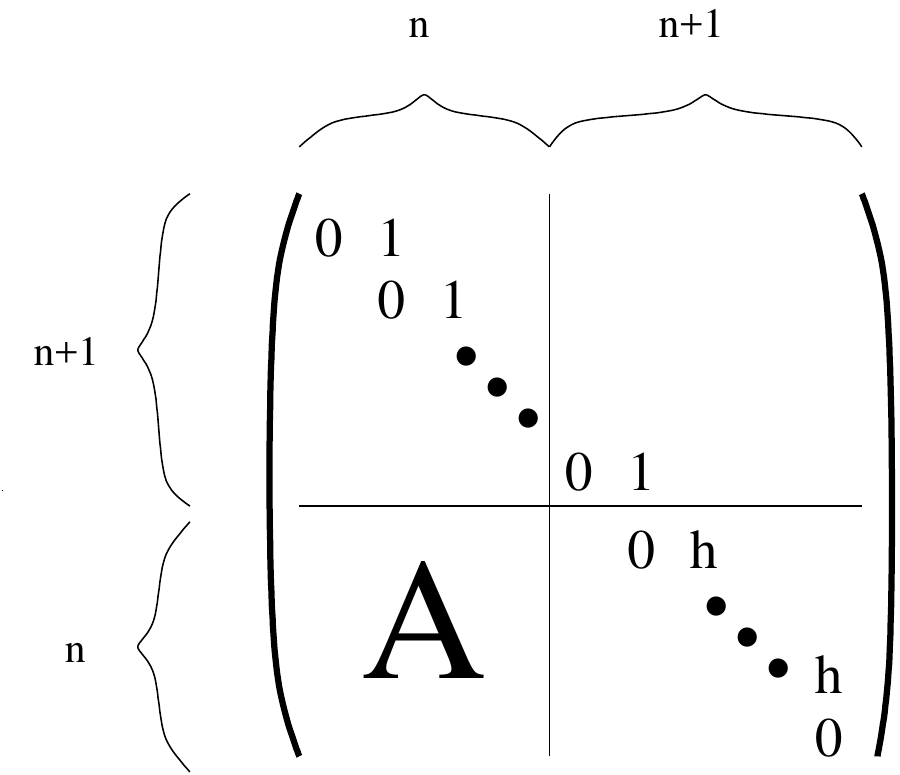} {2.5in}
\end{center}
\caption{A matrix in $\B$.}
\label{fig:matrix}
\end{figure}

\subsection{Graphical interpretation}
We can think of a matrix $B\in\B$ as the weighted adjacency matrix of a directed, weighted graph on $2n+1$ vertices.  From left to right we label the vertices with integers from $-n$ to $n$, with directed edges to the right joining consecutive vertices.  The weight of the edge $(i,i+1)$ is $1$ for $i\in [-n,0]$ and $h$ for $i \in [1,n-1]$, accounting for the superdiagonal entries of $B$. In addition, we can choose to add any subset of the additional $n^2$ edges that connect a vertex $i \in [1,n]$ to a vertex $j \in [-n,-1]$; any of these added edges can be given arbitrary weight from $\{1,\ldots,h-1\}$.  Each of these added edges corresponds to a nonzero entry in the matrix $A$.  Figure \ref{fig:graph} shows an example of this for $n=4$ and $h=2$ with the indicated matrix $A$.

\begin{figure}[h]
\begin{center}
    \hpic {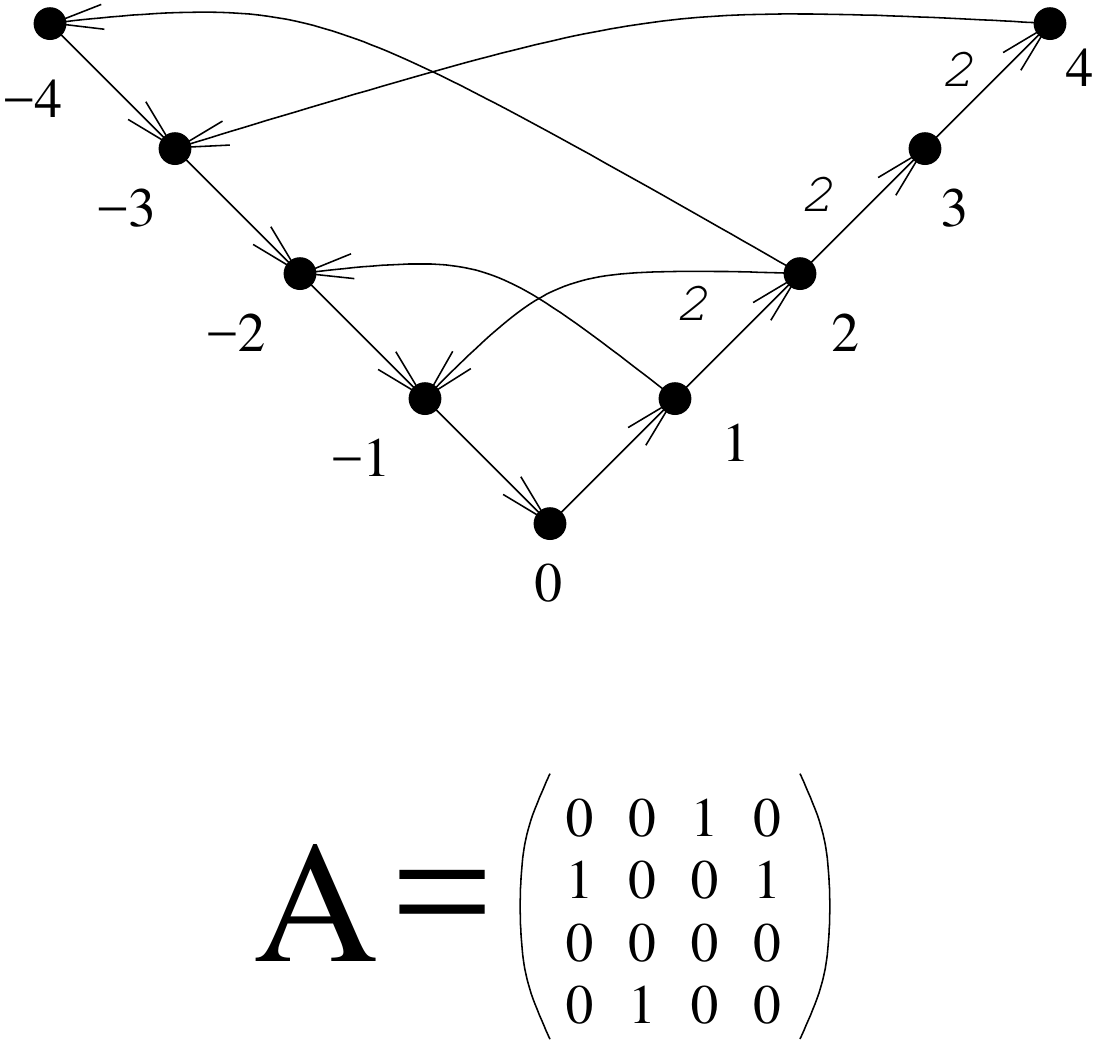} {2in}
\end{center}
\caption{A weighted digraph.}
\label{fig:graph}
\end{figure}

\section{The Characteristic Polynomials}
In this section we describe the set of characteristic polynomials of the matrices in $\B$.
Let $\P$ be the set of monic integer polynomials of degree $2n+1$ of the following form:

\begin{equation}\label{charpoly}
    t^{2n+1} - a_{2n-2}t^{2n-2} - a_{2n-3}t^{2n-3} - \cdots - a_1 t -a_0, 
\end{equation}
where the coefficients $a_i$ satisfy
\begin{itemize}
    \item $a_{2n}=a_{2n-1}=0$,
    \item $a_{2n-k} \in \{0,1,\ldots, h^{k-1} -1\}$, for $k \in [2, n+1]$,
    \item  $\frac{a_{n-k}}{h^{k-1}} \in \{0,1,\ldots,h^{n-k+1} -1\}$, for $k\in[2, n]$.
\end{itemize}

As with $\B$, we have $|\P|=h^{n^2}$.

\begin{theorem} \label{thm:bijection}
    The characteristic polynomial $\chi$ acts as a bijection from $\B$ to $\P$.
\end{theorem}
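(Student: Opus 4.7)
The plan is to compute $\chi(B)$ explicitly in terms of the entries of $A$ using the cycle-cover expansion of the determinant, then observe that the resulting formulas express the coefficients of $\chi(B)$ as base-$h$ expansions of the $A$-entries, which forces both containment in $\P$ and injectivity.

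First I would recall the standard identity
\[
\det(tI - B) = \sum_{S \subseteq [-n,n]} (-1)^{|S|} \det(B_S)\, t^{(2n+1) - |S|},
\]
and then express each principal minor $\det(B_S)$ via the digraph $G$ associated to $B$: $\det(B_S)$ is the signed sum, over vertex-disjoint unions of directed simple cycles covering $S$, of the products of edge weights with sign $(-1)^{|V(C)| - r(C)}$, where $r(C)$ is the number of cycles. The next step, and the key structural observation, is to classify the simple cycles of $G$. Since the only backward edges are the $A$-edges from $\{1,\dots,n\}$ to $\{-n,\dots,-1\}$, every simple cycle must use at least one such edge; but any simple cycle using an $A$-edge $(a,b)$ together with the forward path $b \to b+1 \to \cdots \to a$ necessarily passes through both $-1$ and $0$. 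Hence any two simple cycles share the vertex $0$, so no cycle cover can contain more than one simple cycle. This collapses the cycle-cover sum: only the empty cover and single simple cycles contribute.

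Each simple cycle is then uniquely parameterized by a pair $(a,b)$ with $a\in[1,n]$, $b\in[-n,-1]$, has $a-b+1$ vertices, weight $h^{a-1}A_{a,b}$ (coming from $a-1$ edges of weight $h$ plus $1-b$ edges of weight $1$, times the $A$-edge), and contributes via the formula above to the coefficient of $t^{2n - a + b}$. This yields
\[
\chi(B) = t^{2n+1} \;-\; \sum_{a=1}^{n}\sum_{b=-n}^{-1} A_{a,b}\, h^{a-1}\, t^{2n - a + b}.
\]
Grouping by the exponent $j = 2n - a + b$ (equivalently, by the anti-diagonal $a - b = \mathrm{const}$ of $A$), I would then verify that for $k \in [2, n+1]$ the coefficient $a_{2n-k}$ equals $\sum_{a=1}^{k-1} A_{a, a-k}\, h^{a-1}$, while for $k \in [2,n]$ the coefficient $a_{n-k}$ equals $h^{k-1}\sum_{a'=1}^{n-k+1} A_{a'+k-1, a'-n-1}\, h^{a'-1}$. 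Each of these is precisely a base-$h$ expansion with digits in $\{0,\dots,h-1\}$, matching the size constraints in the definition of $\P$; this shows $\chi(\B) \subseteq \P$.

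Finally, injectivity follows because every anti-diagonal entry $A_{a,b}$ appears as a uniquely determined base-$h$ digit of exactly one coefficient of $\chi(B)$, so $A$ (hence $B$) can be read off from $\chi(B)$. Since $|\B| = |\P| = h^{n^2}$, injectivity plus $\chi(\B)\subseteq \P$ gives a bijection. I expect the only nontrivial step to be the cycle classification — specifically the observation that all simple cycles pass through $\{-1,0\}$ — since everything downstream is bookkeeping on base-$h$ digits indexed by anti-diagonals of $A$.
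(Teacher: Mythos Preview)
Your proposal is correct and follows essentially the same approach as the paper. Both arguments hinge on the observation that every simple cycle in the digraph of $B$ passes through vertex $0$ (or equivalently through $-1,0,1$), so that only single cycles contribute to the determinant; from there, the formula $a_k = \sum_{a-b = 2n-k} A_{a,b} h^{a-1}$ is derived, and the bijection follows immediately from reading the anti-diagonals of $A$ as base-$h$ expansions of the coefficients, exactly as you do.
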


This is proved by the following computations.

\subsection{Computation of the Characteristic Polynomial}

\paragraph{Via Permutations.}

\medskip

Consider the matrix $C=B-tI$.  For any permutation $\sigma$ of $\{-n, \dots, n\}$, let $p_{\sigma}=(-1)^{\sigma}\prod C_{i,\sigma(i)}$ be the corresponding term in $\det(C)$. It suffices to consider terms so that $p_{\sigma}\neq 0$. 

Suppose $\sigma$ does not access the lower left quadrant of the matrix.  Then $\sigma$ is supported on an upper triangular matrix. So $\sigma$ is the identity permutation and $p_{\sigma}=-t^{2n+1}$. 

On the other hand, suppose that $\sigma$ does access the lower left part of the matrix, i.e., suppose that for some  $a\in\{1,\dots, n\}$ and $b\in\{-n,\dots, -1\}$, we have $\sigma(a)=b$. With the assumption that $p_{\sigma}\neq 0$, it follows that $\sigma$ is determined uniquely: $\sigma$ must be the permutation that maps $a$ to $b$ but keeps the remaining elements in order. That is $\sigma(i)=i$ for all $i\notin [b,a]$, while $\sigma(i)=i+1$ for all $i\in [b,a-1]$ and $\sigma(a)=b$.  Thus $\sigma$ is an $(a-b+1)$-cycle, so its sign is $(-1)^{a-b}$.  For such $\sigma$, one computes that
$$
p_{\sigma} = B_{a,b} h^{a-1} t^{2n+b-a}.
$$
It follows that for $k<2n+1$, the coefficient $a_k$ of $t^k$ in Equation \eqref{charpoly} is given by
\begin{equation}\label{coeffs}
a_k= \sum_{a-b=2n-k} B_{a,b} h^{a-1}.
\end{equation}

\noindent
\paragraph{Via Graphs.}

For these matrices, the coefficient $a_k$ is the total weight of all simple loops on the associated graph starting at $0$ and having length $2n+1 -k$.  To see this, we start by noting that for $p_{\sigma}=(-1)^{\sigma}\prod C_{i,\sigma(i)}$ to be a term contributing to $a_k$ the permutation $\sigma$ must have exactly $k$ fixed points (contributing the power of $t^k$).  Suppose such a $\sigma$ has a size $\ell$ cycle $(i_1, i_2, \dots i_{\ell})$ with $i_j \in [-n,n]$, then for $p_{\sigma}$ to be nonzero it must be that $B_{i_1, i_2}B_{i_2, i_3} \dots B_{i_{\ell-1}, i_{\ell}} B_{i_{\ell}, i_{1}}$ is nonzero.   This implies that the cycle $(i_1, i_2,\dots i_{\ell})$ constitutes a loop on the associated graph of $B$. Because the graph is directed, any cycle must move from left to right along the ``V'' portion of the graph and then circle back from the right side of the ``V'' to the left using an edge corresponding to a nonzero entry of the associated $A$ matrix.  All such loops pass through the sequence of nodes $(-1,0,1)$ and thus $\sigma$ must consist of a single simple loop if $p_{\sigma}$ is nonzero.   

Determining $b_k$ therefore consists of weighing the simple loops of length $2n+1 -k$ on the associated graph. Notice that the sign of the contribution is always $+1$ since the sign of a permutation of a simple loop of size $2n+1 -k$ is $(-1)^{2n  -k}$ and the coefficient picks up another factor of $(-1)^k$ from the $k$ fixed points that contribute a factor of $(-t)$ each. Since each loop passes through $0$ we imagine tracing it out by starting at $0$.  Then any loop starts by going to the right for some number $a$ of steps, then takes a directed edge from $a>0$ to some $b<0$ (whose weight is $B_{a,b}$), and then travels to the right along the negative edges back to $0$.  The length of such a path is  $a-b +1$ and the weight of the path is $B_{a,b}h^{a-1}$.

Therefore we again obtain \eqref{coeffs}.
\noindent
\paragraph{One more description.}
The above computations show that one can read off the coefficients $a_i$ from the matrix $A$ as follows:  pad $A$ with zeroes on the left to form the $n\times (2n-1)$ matrix $D=\begin{bmatrix} 0 & | & A \end{bmatrix}$.  If we call the list of numbers $D_{j,j+i}$ the ``$i$th diagonal'' of $D$, then $a_i$ is the integer whose base $h$ expansion is given by the $i$th diagonal of $D$. Theorem \ref{thm:bijection} immediately follows.

\section{Explicit matrices with gap $h^{-O(n^2)}$} \label{sec:explicit}

Mignotte [M82] showed that the degree $d$ polynomial 
$$
m_{d,a}(X) = X^d  - 2(aX-1)^2
$$
has two real roots separated by less than $a^{-\frac{d+2}{2}}$.   

In this section, we exhibit matrices $B$ whose characteristic polynomials $\chi_B(t)$ are Mignotte polynomials multiplied by powers of $t$, thereby proving Theorem \ref{thm:explicit}. 

\subsection*{The $h=2$ case.}

\begin{definition}
    Let $n$ be a positive odd integer, and let $m=2n+1$.  We define the $m\times m$ {\it Mignotte matrix} to be the matrix whose superdiagonal contains $n+1$ $1$'s followed by $n-1$ $2's$, and whose other entries are 0 except
    \begin{align*}
        M_{n+3,1}&=1 \\
        M_{\frac{3n+3}{2}, \frac{n+1}{2}}&=2 \\
        M_{2n,n}&=1
    \end{align*}
\end{definition}

We then have the following
\begin{theorem}
    Let $n$ be a positive odd integer.  Then the $(2n+1)\times (2n+1)$ Mignotte matrix $M$ has characteristic polynomial
    $$
    \chi_M(t) = t^{n-2} m_{n+3, 2^{(n-3)/2}}(-t).
    $$
    Hence there are two roots of $\chi_M(t)$ that are separated by at most  $2^{-(n+5)(n-3)/4}$.
\end{theorem}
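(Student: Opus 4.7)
The plan is to compute $\chi_M(t)$ directly via the coefficient formula derived in the previous section and observe that the result equals the asserted product; Mignotte's bound then delivers the eigenvalue-gap conclusion.

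First I would translate the three off-superdiagonal nonzero positions of $M$ from $1$-indexed coordinates to the paper's $[-n,n]$ indexing via $i\mapsto i-(n+1)$. The three lower-left entries become, in paper indices, $B_{2,-n}=1$, $B_{(n+1)/2,\,-(n+1)/2}=2$, and $B_{n-1,-1}=1$. The superdiagonal pattern ($n+1$ ones in rows $-n,\ldots,0$ followed by $n-1$ twos in rows $1,\ldots,n-1$) is exactly the $\mathcal{B}$ template with $h=2$. The lone $2$ in the lower-left technically places $M$ outside the strictly bounded family $\mathcal{B}$, but the coefficient formula $a_k=\sum_{a-b=2n-k}B_{a,b}h^{a-1}$ was derived with no appeal to $B_{a,b}<h$ and so still applies.

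Second I would plug each entry into that formula. The entry at $(2,-n)$ has $a-b=n+2$ and contributes $1\cdot 2^1=2$ to $a_{n-2}$; the entry at $((n+1)/2,-(n+1)/2)$ has $a-b=n+1$ and contributes $2\cdot 2^{(n-1)/2}=2^{(n+1)/2}$ to $a_{n-1}$; and the entry at $(n-1,-1)$ has $a-b=n$ and contributes $1\cdot 2^{n-2}$ to $a_n$. All other $a_k$ vanish, so with the sign convention of \eqref{charpoly},
\[
\chi_M(t) = t^{2n+1}-2^{n-2}t^{n}-2^{(n+1)/2}t^{n-1}-2\,t^{n-2}.
\]

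Third I would expand the target polynomial $t^{n-2}\,m_{n+3,\,2^{(n-3)/2}}(-t)$. Since $m_{d,a}(X)=X^d-2a^2X^2+4aX-2$ and here $2a^2=2^{n-2}$, $4a=2^{(n+1)/2}$, and $n+3$ is even (as $n$ is odd) so that $(-t)^{n+3}=t^{n+3}$, one obtains precisely the polynomial displayed above. This confirms the claimed factorization. Finally, Mignotte's bound says that $m_{d,a}$ has two real roots within $a^{-(d+2)/2}$ of each other; with $d=n+3$ and $a=2^{(n-3)/2}$ this distance is $2^{-(n-3)(n+5)/4}$. The substitution $X\mapsto -t$ only negates the roots (preserving their distance), and multiplying by $t^{n-2}$ merely adjoins a zero eigenvalue, so $\chi_M$ inherits the same pair of closely spaced real roots.

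There is no serious obstacle here. The entire proof is a coefficient matching between two explicit polynomials; the only care required is in reconciling the $1$-indexed description of the Mignotte matrix with the $[-n,n]$ indexing used in the $\mathcal{B}$/$\mathcal{P}$ machinery, and in remembering that $a_k$ is defined as the \emph{negative} of the coefficient of $t^k$ in $\chi_M$.
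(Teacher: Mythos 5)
Your proof is correct and follows essentially the same approach as the paper's, which simply says that $\chi_M$ is computed by the same procedure used for $\chi_B$ in the preceding section and then applies Mignotte's bound; you have carried out the coefficient computation explicitly, and the index translation, the application of the formula $a_k=\sum_{a-b=2n-k}B_{a,b}h^{a-1}$ to the three off-superdiagonal entries, the expansion of $t^{n-2}m_{n+3,2^{(n-3)/2}}(-t)$, and the final bound $2^{-(n+5)(n-3)/4}$ all check out. Your observation that the formula remains valid despite $M\notin\mathcal B$ (because of the lone entry equal to $h$) matches the paper's own remark to that effect.
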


\begin{proof}
Computing $\chi_M(t)$ follows the same procedure we used to compute $\chi_B$ in Section 2.  The separation in the roots of $\chi_M(t)$ is then less than or equal to the separation of the roots of $m_{n+3, 2^{(n-3)/2}}(-t)$, and we can apply Mignotte's bound. \end{proof}

\begin{remark} The matrix $M$ is not in $\B$, but the method of computing $\chi_M$ is the same.  The three exceptional entries $1, 2, 1$ are colinear in the matrix $M$.  If desired, we could achieve the same characteristic polynomial by replacing the exceptional $2$ with a $1$ in the position $(\frac{3n+5}{2},\frac{n+3}{2})$ immediately to its southeast. This matrix is in $\B$.\end{remark}

\subsection*{The $h>2$ case.}

Now fix $h>2$.  For a positive odd integer $n$, with  $m=2n+1$, define an $m\times m$ matrix $M_n(h)$ to have superdiagonal containing $n+1$ $1$'s followed by $n-1$ $h's$, and whose other entries are 0 except
\begin{align*}
    M_{n+2,2}&=2 \\
    M_{ \frac{3n+1}{2}, \frac{n+3}{2}}&=4 \\
    M_{2n-1,n+1}&=2
\end{align*}

We then have the following
\begin{theorem}
    Let $n$ be a positive odd integer.  Then the $(2n+1)\times (2n+1)$  matrix $M=M_n(h)$ has characteristic polynomial
    $$
    \chi_M(t) = t^{n} m_{n+1, h^{\frac{n-3}{2}}}(-t).
    $$
    Hence there are two roots of $\chi_M(t)$ that are separated by at most  $h^{-(n+3)(n-3)/4}$.
\end{theorem}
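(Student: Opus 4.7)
The plan is to proceed exactly as in the $h=2$ analog just above: apply the permutation/graph expansion of $\det(M-tI)$ from Section 3 to $M = M_n(h)$ to read off its characteristic polynomial, and then invoke Mignotte's gap bound. Since $M_n(h)$ is, like the matrices in $\mathcal{B}$, lower Hessenberg with a single prescribed superdiagonal (now consisting of $n+1$ $1$'s followed by $n-1$ $h$'s) and only a handful of nonzero subdiagonal entries, the same dichotomy will apply: a nonzero permutation term in $\det(M-tI)$ comes either from the identity (contributing the leading $t^{2n+1}$) or from a single simple cycle built out of one backward edge together with the forward path that closes it.

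Concretely, $M_n(h)$ has exactly three backward edges, namely $M_{n+2,2}=2$, $M_{(3n+1)/2,(n+3)/2}=4$, and $M_{2n-1,n+1}=2$. For each, I will identify the unique simple cycle it induces and compute its weighted length. Using the formula (or, equivalently, the graph-cycle argument) from Section 3, a backward edge from row $a$ to column $b$ with $b\le n+1\le a$ contributes $(\text{edge weight})\cdot h^{(a-(n+1))-1+1} = (\text{edge weight})\cdot h^{a-n-2}$ to the coefficient of $t^{2n+1-(a-b+1)}$, because the forward segment from $b$ up to the center column $n+1$ uses only weight-$1$ edges while the segment onward to row $a$ contributes $a-(n+2)$ factors of $h$ along with one weight-$1$ crossing edge. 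The one small novelty is that the third backward edge lands in column $n+1$, i.e.\ at the ``center'' rather than strictly inside the $A$-block used to define $\mathcal{B}$, so $M_n(h)$ is not literally an element of $\mathcal{B}$; however, the permutation/cycle derivation of Section 3 goes through verbatim, since that argument only needed $b$ to sit at or to the left of the superdiagonal split.

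Tallying the three contributions (and keeping track of signs, which all come out $+$ in exactly the same way as in Section 3), I expect to obtain
\begin{equation*}
\chi_M(t)\;=\;t^{2n+1}\;-\;2h^{n-3}\,t^{n+2}\;-\;4h^{(n-3)/2}\,t^{n+1}\;-\;2\,t^n.
\end{equation*}
I will then factor out $t^n$ and compare the remaining degree-$(n+1)$ factor with
\[
m_{n+1,a}(-t)\;=\;(-t)^{n+1}-2(a(-t)-1)^2\;=\;t^{n+1}-2a^2 t^2-4a\,t-2
\]
(using $(-t)^{n+1}=t^{n+1}$ since $n$ is odd) specialized to $a=h^{(n-3)/2}$, thereby identifying $\chi_M(t) = t^n\, m_{n+1,\,h^{(n-3)/2}}(-t)$.

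Finally, Mignotte's theorem gives two real roots of $m_{d,a}$ at distance at most $a^{-(d+2)/2}$; with $d=n+1$ and $a=h^{(n-3)/2}$ this yields a separation of $h^{-(n-3)(n+3)/4}$, and the substitution $t\mapsto -t$ preserves gaps. The only step that needs mild care, rather than pure bookkeeping, is verifying that the cycle-weight formula of Section 3 remains valid when a backward edge terminates at the center column; this is the main (and modest) obstacle, and should be dispatched in a single sentence.
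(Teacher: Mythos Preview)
Your proposal is correct and follows exactly the approach the paper intends—indeed, the paper gives no separate proof for the $h>2$ theorem, relying implicitly on the same cycle/permutation computation used in the $h=2$ case, so your write-up is more detailed than the original (including the observation that the third backward edge lands at the center column, placing $M_n(h)$ just outside $\mathcal{B}$). One cosmetic slip: the intermediate exponent $(a-(n+1))-1+1$ does not simplify to $a-n-2$, but your stated result $h^{a-n-2}$, the ensuing coefficient calculations, and the Mignotte bound are all correct.
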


\subsection*{The $0$-$1$ case.}
We now note that Corollary \ref{cor:01} follows by taking the ``double cover'' of any matrix obtained from the $h=2$ case of Theorem \ref{thm:explicit}, for example the Mignotte matrices described above. Starting with a matrix $M$ all of whose entries are $0,1$, or $2$, construct a matrix $M^{\prime}$ with twice as many rows and columns by replacing each entry of $M$ with a $2\times 2$ matrix in the following manner: $0$ is replaced by the zero matrix; $1$ is replaced by the identity matrix; $2$ is replaced by the all-ones matrix.  The matrix $M'$ is the double cover of $M$.

We note that there is a straightforward way of viewing this construction as a directed graph. Theorem \ref{thm:explicit} provides a weighted directed graph $G$, in which every edge has weight $1$ or $2$. The double cover $G'$ of $G$ will be an (unweighted) directed graph, constructed as follows: For each vertex $v$ of $G$, there are two vertices $v_0$ and $v_1$ of $G'$. For each directed edge $(v,w)$ of $G$ of weight 1, $G'$ will have two directed edges of the form $(v_i, w_i)$; for each directed edge $(v,w)$ of $G$ of weight 2, $G'$ will have four directed edges of $G'$ the form $(v_i, w_j)$. The directed graph $G'$ obtained in this way, starting with the weighted directed graph shown in Figure \ref{fig:graph}, has adjacency matrix $M'$.

Given any eigenvector for $M$, there is a corresponding eigenvector for $M'$ with the same eigenvalue. As $M'$ is a $0$-$1$ matrix, Corollary \ref{cor:01} follows.

\section{A large collection of distinct eigenvalues}
\label{sec:many}

Here we prove Theorem \ref{thm:many}.  The idea is to find a lot of irreducible polynomials in $\P$.  These are provided by the following number theoretic lemma from \cite[Example 5.2]{mod5}.

\begin{lemma}\label{lem:mod5irr}
    If $q \equiv 1 \mod 4$ is prime and $a$ is a quadratic nonresidue in $F_q$, 
    then the polynomial $t^{2^k} - a$ is irreducible mod $q$.
\end{lemma}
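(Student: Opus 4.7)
The plan is to invoke the classical Capelli irreducibility criterion for binomials: over a field $F$ whose characteristic does not divide $n$, the polynomial $t^n - a$ is irreducible if and only if (i) $a \notin F^p$ for every prime $p$ dividing $n$, and (ii) if $4 \mid n$, then $-4a \notin F^4$. For $n = 2^k$ the only relevant prime is $p=2$, so (i) reduces to ``$a$ is not a square,'' which is exactly the hypothesis.

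Verifying (ii) is the one place the assumption $q \equiv 1 \pmod 4$ enters. This congruence implies that $-1$ is a square in $F_q$, say $-1 = \iota^2$. Then $-4a = (2\iota)^2 \cdot a$, so if $-4a = c^4$ for some $c \in F_q$, rearranging gives $a = (c^2/(2\iota))^2$, making $a$ a square --- contradicting the hypothesis. Therefore condition (ii) holds as well, and $t^{2^k} - a$ is irreducible modulo $q$.

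If a self-contained argument is preferred, I would instead compute the multiplicative order of a root $\beta$ of $t^{2^k} - a$ in $\overline{F_q}^{\times}$ and show that it generates $F_{q^{2^k}}$ over $F_q$; a polynomial of degree $d$ over $F_q$ is irreducible iff any of its roots generates $F_{q^d}$. Writing $s = v_2(q-1) \geq 2$, the nonsquare condition $a^{(q-1)/2} = -1$ forces $v_2(\mathrm{ord}(a)) = s$, and from $\beta^{2^k} = a$ one then obtains $v_2(\mathrm{ord}(\beta)) = s+k$. The smallest $m$ with $\beta \in F_{q^m}$ is the multiplicative order of $q$ in $(\Z/2^{s+k}\Z)^{\times}$, and because $4 \mid q-1$ the lifting-the-exponent lemma gives the clean formula $v_2(q^m - 1) = s + v_2(m)$, from which this order equals exactly $2^k$.

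The main conceptual point --- and the main obstacle to a clean from-scratch proof --- is recognizing why $q \equiv 1 \pmod 4$ is essential: without it, $-1$ may fail to be a square in $F_q$, condition (ii) of Capelli can genuinely fail, and the LTE formula splits into cases depending on the parity of the exponent. This is not a proof artifact: $t^4 + 1$ factors as $(t^2+t-1)(t^2-t-1)$ modulo $3$, showing that the hypothesis cannot be dropped.
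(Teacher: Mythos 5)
Your proof is correct, and it is genuinely different from the paper's: the paper does not prove this lemma at all --- it cites it verbatim from Gao--Panario \cite[Example 5.2]{mod5} and uses it as a black box, so you are supplying an argument the paper omits. Your Capelli argument is the standard textbook route and is clean: the only thing to verify is that $q\equiv 1\pmod 4$ makes $-1$ a square, which disposes of condition~(ii) via $-4a=(2\iota)^2a$ with $\iota^2=-1$; your rearrangement $a=(c^2/(2\iota))^2$ is exactly right. (Note that condition~(ii) is only in play when $k\ge 2$; for $k\le 1$ the nonsquare hypothesis alone suffices, so the argument degenerates gracefully. Also, the Vahlen--Capelli criterion holds over arbitrary fields, so the caveat about the characteristic not dividing $n$ is unnecessary, though harmless since $q$ is odd.) Your order/LTE sketch is a correct from-scratch alternative and has the merit of isolating precisely where $4\mid q-1$ enters, namely in the clean valuation identity $v_2(q^m-1)=v_2(q-1)+v_2(m)$, which fails for $q\equiv 3\pmod 4$. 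The counterexample $t^4+1\equiv(t^2+t-1)(t^2-t-1)\pmod 3$ is a nice confirmation that the congruence hypothesis is not removable.
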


Let $q=5$ and assume $h>1$ is not a multiple of $5$.
Let $p(t)=t^{2n}-a$ where $a$ is either $h^{n-2}$ or $2h^{n-2}$, whichever is a quadratic nonresidue mod 5.
By the above lemma, $p(t)$ is irreducible mod 5 (hence also over $\Z$) whenever $n$ is a power of $2$.


Now let $P\in\P$ be such that $P(t) \equiv tp(t)\mod 5$.
There are many such polynomials: one fifth of the possible values for each coefficient $a_i$ are acceptable.  By Theorem \ref{thm:bijection} there are corresponding matrices $B\in\B$ whose characteristic polynomials therefore have distinct irreducible factors each of degree $2n$.  The result is a collection of $\frac {2n}{5^{2n-1}} h^{n^2}$ distinct (complex) eigenvalues of elements of $\B$, proving Theorem \ref{thm:many}.

These eigenvalues are bounded in magnitude by $2hn$.
It follows that there are many pairs of numbers in $\spec(2n+1,h)$ that are within a distance (roughly) $h^{-n^2/2}$ of each other.  The block sum of two corresponding matrices is then a reducible $(4n+2)\times (4n+2)$ matrix with gap at most $h^{-n^2/2}$.

\begin{remark}
    Theorem \ref{thm:many} is an assertion about $(2n+1)\times (2n+1)$ matrices where $n$ is a power of $2$.  One can obtain a similar result in an arbitrary dimension $d$ by finding the largest $n=2^k$ such that $n\leq (d-1)/2$ and applying the above construction in dimension $2n+1$.  Taking a direct sum with an appropriately sized identity matrix will yield a collection of $d\times d$ matrices with the desired property that the union of their spectra is very large, nearly $h^{n^2}$.  In the unlucky case that $(d-1)/2$ is just larger than a power of $2$, we lose a factor of 2 in the dimension with $n\approx d/4$ and thus we get only about  $h^{d^2/16}$ distinct eigenvalues. 

Igor Shparlinski pointed out to us the following idea for improving this bound to about $h^{d^2/4}$ in arbitrary dimension $d$.  Instead of restricting the degrees of the irreducible polynomials $t^n-a$ to the values $n=2^k$ as in Lemma \ref{lem:mod5irr}, one can allow degrees of the form $n=2^k3^\ell$.  By careful choice of $a$, irreducibility can still be guaranteed e.g.~from Theorem 3.75 of \cite{lidl_niederreiter_1996}. The (increasing) sequence of integers of the form $2^k3^\ell$ has ratios approaching $1$ \cite{Tijdeman1974}, allowing one to apply our construction with $n\approx d/2$, improving the bound. 
\end{remark}

\section{Questions}

%

The matrices we construct in our proofs are not symmetric. The question of the correct asymptotics of $g(n,h)$ restricted to normal or symmetric matrices remains open, and it is entirely possible that the truth in that setting is closer to \eqref{eqn:ub}.

Our matrices also have non-negative entries.  It is possible that our bound can be improved for general height $h$ matrices by including negative entries.

In studying this problem we encountered several related questions of interest.  In what follows let $\M$ denote the set of $n\times n$ integer matrices of height $h$.  One could also add the adjectives ``normal,'' ``symmetric,'' or ``with non-negative entries'' if desired.

\begin{enumerate}
    \item How big is the set of eigenvalues of elements of $\M$?
    \item How big is the set of characteristic polynomials of elements of $\M$?
    \item Given $A\in\M$, how many elements of $\M$ can be similar to $A$?
    \item If $A,B\in\M$ are similar, can one bound the height of the smallest conjugating matrix?
    \item How small can one make the entries of a matrix with given (integer) characteristic polynomial?
    \item What is the smallest gap between roots of a real-rooted integer polynomial with bounded coefficients?
\end{enumerate}
\bibliographystyle{alpha}
\newcommand{\etalchar}[1]{$^{#1}$}


\end{document}